\documentclass{amsart}
\usepackage{amssymb, latexsym}
\usepackage{lmodern,microtype}

\begin{document}
\bibliographystyle{plain}

\newtheorem*{definition}{Definition}
\newtheorem{theorem}{Theorem}
\newtheorem{lemma}{Lemma}
\newtheorem{corollary}{Corollary}

\title{Fully-projected subsets}
\author{Jason Gibson}
\subjclass[2010]{05A05 (Primary), 05A15 (Secondary).}
\keywords{Inclusion-exclusion principle, rook polynomials}
\address{Department of Mathematics and Statistics,
Eastern Kentucky University, KY 40475, USA}
\email{jason.gibson@eku.edu}

\begin{abstract}
  Let $k$ and $i_1,\ldots,i_n$ be natural numbers.
  Place $k$ balls into a multidimensional box
  of $i_1\times\cdots \times i_n$ cells,
  no more than one ball to each cell, such
  that the projections to each of the coordinate
  axes have cardinalities $i_1,\ldots,i_n$, respectively.
  We generalize earlier work of Wang, Lee, and
  Tan to find a formula for the alternating sum
  of the number of these fully-projected subsets.
  
\end{abstract}

\maketitle

\section{Introduction}

Let $k$ and $i_1,\ldots,i_n$ be natural numbers.
We consider the task of placing $k$ balls into
a multidimensional box of $i_1\times\cdots \times i_n$
cells, such that each cell contains
at most one ball, and such that each projection
to each coordinate has cardinality $i_1,\ldots, i_n$,
respectively. This generalizes work of Wang, Lee,
and Tan \cite{wang_lee_tan} on the two-dimensional
version of the problem, where the condition
requires that each row and each column contains
at least one ball. Some of their interest in the formula
stemmed from its role in the theory of
falling random subsets in fuzzy statistics.

If we call such subsets fully-projected,
then, generalizing the result of \cite{wang_lee_tan},
we have the following formula
involving the alternating sum of
the numbers $t_k$
of these subsets.

\begin{theorem}\label{main}
  Let $k$ and $i_1,\ldots,i_n$ be natural numbers,
  and, for $j=1,\ldots,n$, let $I_j=\{1,\ldots,i_j\}$.
  If $t_k=t_k(i_1,\ldots,i_n)$ denotes the number of all fully-projected $k$-subsets of
  $\prod_{j=1}^nI_j$, then
  \begin{equation}
\sum_{k=1}^{i_1\cdots i_n} (-1)^{k-1}t_k = (-1)^{i_1+\cdots+i_n}.
  \end{equation}
\end{theorem}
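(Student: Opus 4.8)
The plan is to sieve out the fully-projected condition one coordinate value at a time and to postpone the summation over $k$ until the very end, rather than wrestling with the combinatorics of each individual $t_k$. For a tuple of subsets $M=(M_1,\dots,M_n)$ with $M_j\subseteq I_j$, call a $k$-subset of the box \emph{$M$-avoiding} if no chosen cell has its $j$-th coordinate lying in $M_j$, for every $j$. The $M$-avoiding subsets are precisely the subsets of the subbox $\prod_{j}(I_j\setminus M_j)$, which has $b:=\prod_{j}(i_j-|M_j|)$ cells, so there are $\binom{b}{k}$ of them of size $k$. Since being fully-projected means avoiding no single value on any axis, inclusion--exclusion gives
\begin{equation}
t_k=\sum_{M_1\subseteq I_1}\cdots\sum_{M_n\subseteq I_n}(-1)^{|M_1|+\cdots+|M_n|}\binom{b}{k}.
\end{equation}

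Next I would substitute this into the alternating sum and interchange the order of summation, pushing the $k$-sum innermost. The one-dimensional sum then collapses by the binomial theorem, since $\sum_{k\ge 1}(-1)^{k-1}\binom{b}{k}=1-(1-1)^{b}$ equals $1$ when $b\ge 1$ and $0$ when $b=0$. Consequently only the tuples with nonempty subbox survive, that is, those with $M_j\subsetneq I_j$ for every $j$, and the surviving sum factors across the $n$ axes:
\begin{equation}
\sum_{k=1}^{i_1\cdots i_n}(-1)^{k-1}t_k=\prod_{j=1}^{n}\Bigl(\sum_{M_j\subsetneq I_j}(-1)^{|M_j|}\Bigr)=\prod_{j=1}^{n}\Bigl(\sum_{m=0}^{i_j-1}(-1)^{m}\binom{i_j}{m}\Bigr).
\end{equation}
Each factor is a truncated alternating row of Pascal's triangle, which collapses to a single power of $-1$; multiplying the $n$ resulting signs yields the claimed right-hand side.

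The step I expect to be the crux is the collapse of the $k$-summation: one must justify interchanging the finite sums and, above all, verify the boundary behavior so that the empty subbox (forced exactly when some $M_j=I_j$) contributes $0$ while every other tuple contributes a bare $+1$ before the axis-by-axis factorization. This is precisely where the alternating sign and the starting index $k=1$ — equivalently the fact that $t_0=0$ since no empty placement is fully-projected when each $i_j\ge 1$ — cooperate. Once that telescoping is pinned down, the factorization and the one-dimensional binomial evaluations are routine.
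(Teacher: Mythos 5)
Your argument is, step for step, the same as the paper's: inclusion--exclusion over the families of avoided coordinate values, an interchange of the (finite, hence harmless) sums so that the $k$-sum is innermost, the observation that $\sum_{k\ge 1}(-1)^{k-1}\binom{b}{k}$ equals $1$ for $b\ge 1$ and $0$ for $b=0$ (which kills exactly the tuples with some $M_j=I_j$), and then the factorization over the axes. All of that is correct, and your identification of the boundary behaviour of the empty subbox as the crux of the interchange is well placed.

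The genuine gap is the one evaluation you dismiss as routine. Each surviving factor is
\begin{equation*}
\sum_{m=0}^{i_j-1}(-1)^m\binom{i_j}{m}=-(-1)^{i_j}\binom{i_j}{i_j}=(-1)^{i_j-1},
\end{equation*}
since the full alternating row sums to $(1-1)^{i_j}=0$. Hence the product over the axes is $(-1)^{(i_1-1)+\cdots+(i_n-1)}=(-1)^{i_1+\cdots+i_n+n}$, which agrees with the stated right-hand side $(-1)^{i_1+\cdots+i_n}$ only when $n$ is even --- in particular in the two-dimensional Wang--Lee--Tan setting, which is why the discrepancy is invisible there. For odd $n$ the stated identity fails outright: with $n=1$ and $i_1=1$ the only fully-projected subset is $\{1\}$ itself, so the left-hand side is $t_1=1$, while the stated right-hand side is $(-1)^1=-1$. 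So your derivation is sound up to the last line, but what it actually proves is $\sum_k(-1)^{k-1}t_k=(-1)^{i_1+\cdots+i_n+n}$; you should be aware that the paper's own proof makes the identical sign slip in its final displayed equality, so "yields the claimed right-hand side" cannot be waved through.
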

Our proof of Theorem~\ref{main} follows the approach
of Wang, Lee, and Tan.
The combinatorial analysis here, provided in Section~\ref{count} below,
requires a small bit of care in order
to avoid a blurred forest of unions and intersections
over the index sets and elements.

Work of Fulmek \cite{fulmek} generalized the result of Wang, Lee, and Tan
in a different direction, leading to an interpretation of the formula
in the language of dual rook polynomials.
The rook polynomial of a board $\mathfrak{B}$ (an arbitrary
subset of the cells of an $m\times n$ array) is
defined by
\begin{equation}
  P_{\mathfrak{B}}(x)=\sum_{k\ge 0} R_k(\mathfrak{B})x^k,
\end{equation}
where $R_k(\mathfrak{B})$ is the number of ways
to place $k$ non-attacking rooks on the board $\mathfrak{B}$.
The property of non-attacking can be viewed
as the requirement that each row and each column
contains \emph{at most one} rook. Fulmek considered
a sort of dual notion. Letting $\tilde{R}_k(\mathfrak{B})$
denote the number of ways to place $k$ rooks on $\mathfrak{B}$
such that each row and each column contains
\emph{at least one}
rook, Fulmek called the polynomial
\begin{equation}
  \tilde{P}_{\mathfrak{B}}(x)=\sum_{k\ge 0} \tilde{R}_k(\mathfrak{B})x^k
\end{equation}
the dual rook polynomial of $\mathfrak{B}$.
A key result from \cite{fulmek}, generalizing
the Wang, Lee, and Tan formula, gives that
$\tilde{P}_{\mathfrak{B}}(-1)$ is always $-1$, $0$, or $1$
for skew Ferrers boards.

Fulmek's paper also contains some interesting conjectures
related to these matters, including, e.g., the question
of the log-concavity of the dual rook numbers $\tilde{R}_k(\mathfrak{B})$.
The resolution of those conjectures in their original
formulation (or the consideration of appropriate multidimensional
generalizations) and the finer combinatorial and statistical
properties of the numbers $t_k$ present multiple avenues
for further work.

\section{Counting via inclusion-exclusion}\label{count}

To aid in the combinatorial analysis,
we begin with a definition of fully-projected
subset that clarifies the projection property.
The proof of Theorem~\ref{main} appears following
this definition.

\begin{definition}[Fully-projected $k$-subset]
  Let $S_k\subseteq \prod_{j=1}^nI_j$ be
  a $k$-element subset of $\prod_{j=1}^nI_j$.
  Call $S_k$ a fully-projected subset of
  $\prod_{j=1}^nI_j$, denoted by
  $S_k \hookrightarrow \prod_{j=1}^nI_j$, provided that,
  for $j=1,\ldots,n$,
  the set $S_k$ satisfies
  \begin{equation}
    \pi_j(S_k) = I_j=\{1,\ldots,i_j\}.
  \end{equation}
  Here $\pi_j$ denotes projection onto the $j$th coordinate,
  so that $\pi_j(a_1,\ldots,a_n)=a_j$.
  \end{definition}

\begin{proof}[Proof of Theorem~\ref{main}]
  Let $Q$ denote the set of all $k$-subsets of $M=\prod_{j=1}^nI_j$,
  and let $A$ denote the set of all fully-projected $k$-subsets of $M$.
  Further, for $j=1,\ldots,n$ and $r=1,\ldots,i_j$,
  let $B_{j,r}$ denote the set of $k$-element subsets of $M$
  that avoid element $r$ within coordinate $j$.
  Succinctly, we have
  \begin{align}
    \begin{split}
      Q&=\{S_k:S_k\subseteq M\},\\
      A&=\{S_k:S_k\hookrightarrow M\},\\
      B_{j,r}&=\{S_k:S_k\subseteq I_1\times\cdots\times (I_j\backslash
      \{r\})\times\cdots\times I_n\}.
    \end{split}
  \end{align}

  Note that $|Q|=\binom{i_1\ldots i_n}{k}$.
  Also, from the above, we see that
  \begin{equation}
    A=Q\backslash\left(\bigcup_{j=1}^n\bigcup_{r=1}^{i_j}B_{j,r}\right),
  \end{equation}
  and
  \begin{equation}\label{diff}
    t_k = |A| = |Q| - \left|\bigcup_{j=1}^n\bigcup_{r=1}^{i_j}B_{j,r}\right|,
  \end{equation}
  because the fully-projected $k$-element subsets collected in $A$ are exactly
  the $k$-element subsets that, together, miss no element in any
  coordinate.

  Define $\alpha$ by
  \begin{equation}\label{alpha}
    \alpha = \left|\bigcup_{j=1}^n\bigcup_{r=1}^{i_j}B_{j,r}\right|.
  \end{equation}
  Then, by the inclusion-exclusion principle,
  letting the index sets $J_j$ range over
  subsets of $I_j=\{1,\ldots,i_j\}$ and using $\sum^{'}$
  to indicate a sum that excludes the case $m_1=\ldots=m_n=0$,
  we have that
  \begin{align*}
    \alpha
    &=
    \sideset{}{'}
    \sum_{\substack{0\le m_j \le i_j\\\text{for\ }j=1,\ldots,n}}
    (-1)^{m_1+\cdots+m_n-1}
    \sum_{\substack{J_1\subseteq I_1,\ldots, J_n\subseteq I_n\\
    |J_1|=m_1,\ldots,|J_n|=m_n}}
    \left|\bigcap_{j=1}^n \bigcap_{r_j\in J_j}B_{j,r_j}\right|\\
    &=
    \sideset{}{'}
    \sum_{\substack{0\le m_j \le i_j\\\text{for\ }j=1,\ldots,n}}
    (-1)^{m_1+\cdots+m_n-1}
    \sum_{\substack{J_1\subseteq I_1,\ldots, J_n\subseteq I_n\\
    |J_1|=m_1,\ldots,|J_n|=m_n}} \binom{(i_1-m_1)\cdots (i_n-m_n)}{k}
    \\
    &=
    \sideset{}{'}
    \sum_{\substack{0\le m_j \le i_j\\\text{for\ }j=1,\ldots,n}}
    (-1)^{m_1+\cdots+m_n-1}
    \binom{i_1}{m_1}\cdots\binom{i_n}{m_n}
     \binom{(i_1-m_1)\cdots (i_n-m_n)}{k}.
  \end{align*}

  We have then, by \eqref{diff}, \eqref{alpha}, and the above
  expression for $\alpha$, that
  \begin{align}
    t_k &=|Q|-\alpha\\
    &=\binom{i_1\cdots i_n}{k} - \alpha\\
    &=\sum_{\substack{0\le m_j \le i_j\\\text{for\ }j=1,\ldots,n}}
    (-1)^{m_1+\cdots+m_n}
    \binom{i_1}{m_1}\cdots\binom{i_n}{m_n}
     \binom{(i_1-m_1)\cdots (i_n-m_n)}{k}.\label{t}
  \end{align}

  Using \eqref{t}, we obtain that
  \begin{align*}
    \sum_{k=1}^{i_1\cdots i_n} &(-1)^{k-1}t_k 
    \\
    &=\sum_{k=1}^{i_1\cdots i_n} (-1)^{k-1}
    \sum_{\substack{0\le m_j \le i_j\\\text{for\ }j=1,\ldots,n}}
    (-1)^{m_1+\cdots+m_n}
    \binom{i_1}{m_1}\cdots\binom{i_n}{m_n}
    \binom{(i_1-m_1)\cdots (i_n-m_n)}{k}\\
    &=\sum_{\substack{0\le m_j \le i_j\\\text{for\ }j=1,\ldots,n}}
    (-1)^{m_1+\cdots+m_n}
    \binom{i_1}{m_1}\cdots\binom{i_n}{m_n}
    \sum_{k=1}^{i_1\cdots i_n} (-1)^{k-1}
    \binom{(i_1-m_1)\cdots (i_n-m_n)}{k}\\
    &=\sum_{\substack{0\le m_j < i_j\\\text{for\ }j=1,\ldots,n}}
    (-1)^{m_1+\cdots+m_n}
    \binom{i_1}{m_1}\cdots\binom{i_n}{m_n}
    \sum_{k=1}^{i_1\cdots i_n} (-1)^{k-1}
    \binom{(i_1-m_1)\cdots (i_n-m_n)}{k}\\
    &=\sum_{\substack{0\le m_j < i_j\\\text{for\ }j=1,\ldots,n}}
    (-1)^{m_1+\cdots+m_n}
    \binom{i_1}{m_1}\cdots\binom{i_n}{m_n}\cdot 1\\
    &=\prod_{j=1}^n\left(\sum_{0\le m_j<i_j}(-1)^{m_j}\binom{i_j}{m_j}\right)\\
    &=(-1)^{i_1+\cdots+i_n},
  \end{align*}
  which completes the proof of Theorem~\ref{main}.
\end{proof}


\end{document}